\documentclass[11pt]{amsart}
\usepackage[margin=1.1in]{geometry}
\usepackage[T1]{fontenc}
\usepackage[utf8]{inputenc}
\usepackage{lmodern}
\usepackage{microtype}
\usepackage{mathtools,amssymb,amsthm}
\usepackage{amsmath}
\newcommand{\C}{\mathbb{C}}
\usepackage[hidelinks]{hyperref}

\newtheorem{theorem}{Theorem}[section]
\newtheorem{lemma}[theorem]{Lemma}
\newtheorem{corollary}[theorem]{Corollary}
\newtheorem{proposition}[theorem]{Proposition}
\theoremstyle{remark}
\newtheorem{remark}[theorem]{Remark}
\newtheorem{example}[theorem]{Example}

\newcommand{\Q}{\mathbf{Q}}
\newcommand{\Z}{\mathbf{Z}}
\newcommand{\F}{\mathbf{F}}

\title[Zero coefficients and rational Lambert series]{Zero coefficients of rational power series and rational Lambert series}
\author{Igor Rivin}
\address{Mathematics Department, Temple University and Dimension Reducers, LLC}
\email{igor@dimensionreducers.com}
\thanks{The question on Lambert Series was asked by an as-yet-unnamed OpenAI model, and this paper was written with the help of GPT5.4}
\date{March 2026}
\subjclass[2020]{Primary 11B37; Secondary 11A25, 11A41, 30B10, 13B30}
\keywords{Skolem-Mahler-Lech theorem, Lambert series, linear recurrence sequence, rational generating function, prime-index coefficients, divisor sums}

\begin{document}
\begin{abstract}
We record several consequences of the Skolem-Mahler-Lech theorem in the language of rational power series and connect them to a rigidity theorem for Lambert series. First, the zero set of the coefficient sequence of a rational function is eventually periodic. Second, if all sufficiently large prime-index coefficients vanish, then after subtracting a polynomial the function is a finite sum of rational functions in proper powers $z^d$, although in general not a single such function. Third, if a rational power series has infinitely many zero coefficients, then among the poles of minimum modulus there must be a root-of-unity relation. We then prove the following Lambert-series rigidity statement: if $\gamma=(\gamma_n)_{n\ge 1}$ is eventually linearly recurrent and
\begin{equation*}
L_\gamma(z):=\sum_{n\ge 1}\gamma_n\frac{z^n}{1-z^n}
\end{equation*}
is rational, then $\gamma$ is finitely supported. The proof uses the same finite-place setup as Skolem-Mahler-Lech, but in the Lambert setting the divisor-sum identity produces the contradiction before any $p$-adic interpolation step is needed.
\end{abstract}
\maketitle
\section{Introduction}
Let
\[
F(z)=\sum_{n\ge 0} a_n z^n\in \C[[z]].
\]
When $F$ is rational, the coefficient sequence $(a_n)$ satisfies a constant-coefficient linear recurrence from some point on, and conversely every eventually linearly recurrent sequence has rational generating function. The classical Skolem-Mahler-Lech theorem says that the zero set
\[
Z(F):=\{n\ge 0:a_n=0\}
\]
is eventually periodic: it is a finite set together with finitely many arithmetic progressions. We use this in two elementary ways. First, if all sufficiently large prime-index coefficients vanish, then the eventual support of $(a_n)$ can only occupy residue classes that are not coprime to the ambient modulus; this yields a decomposition of $F$ into a finite sum of rational functions in proper powers $z^d$. Second, infinitely many zero coefficients force a root-of-unity relation among the dominant characteristic roots, or equivalently among the poles of minimum modulus.

The questions considered here grew out of Lambert-series identities in Rivin's work on free groups \cite{Rivin} and from the rigidity problem for the operator
\[
(T_\gamma G)(z):=\sum_{n\ge 1}\gamma_n G(z^n).
\]
Given a sequence $\gamma=(\gamma_n)_{n\ge 1}$, define its Lambert series by
\[
L_\gamma(z):=\sum_{n\ge 1}\gamma_n\frac{z^n}{1-z^n}.
\]
Expanding each summand geometrically gives
\begin{equation}\label{eq:lambert-coeffs}
L_\gamma(z)=\sum_{m\ge 1} b_m z^m,
\qquad
b_m:=\sum_{d\mid m}\gamma_d.
\end{equation}
Thus the coefficients of $L_\gamma$ are the divisor sums of $\gamma$.

Our main theorem says that the recurrence world is rigid.

\begin{theorem}\label{thm:main-arith}
Let $\gamma=(\gamma_n)_{n\ge 1}\subset \C$, and put
\[
b_n:=\sum_{d\mid n}\gamma_d \qquad (n\ge 1).
\]
If both $\gamma$ and $b=(b_n)_{n\ge 1}$ are eventually linearly recurrent, then $\gamma$ is finitely supported.
\end{theorem}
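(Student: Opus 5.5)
The plan is to use the Skolem--Mahler--Lech theorem (in its eventually-periodic form recalled in the introduction) twice: once on an auxiliary recurrent sequence, to get an exact divisor-sum identity at integers coprime to a fixed modulus, and once on $\gamma$ itself, to turn "zero on many integers" into "eventually zero". Throughout, "recurrent" means eventually linearly recurrent. Sums, differences, and restrictions to arithmetic progressions $n\mapsto x_{an+b}$ of recurrent sequences are recurrent.

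\emph{Step 1: the coprime case.} Put $c_n:=b_n-\gamma_n=\sum_{d\mid n,\,d<n}\gamma_d$; this is recurrent. For every prime $p$ we have $c_p=\gamma_1$, so $c_n-\gamma_1$ vanishes at all primes. By Skolem--Mahler--Lech its zero set is, up to a finite set, a union of full residue classes modulo some $M$. Each residue class coprime to $M$ contains infinitely many primes by Dirichlet, so it must be one of these classes. Hence there is $N_0$ such that
\[
\sum_{d\mid n,\ d<n}\gamma_d=\gamma_1\qquad\text{for all } n>N_0 \text{ with } (n,M)=1 .
\]
Enlarge $N_0$ so that it exceeds $M$. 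Call $n$ \emph{rough} if every prime factor of $n$ exceeds $N_0$; rough integers are coprime to $M$.

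We show $\gamma_n=0$ for every rough $n>1$, by induction on $\Omega(n)$. The rough divisors of a rough $n$ are exactly its divisors $d>1$, together with $1$. Taking $n=pq$ with $p\ne q$ rough primes gives $\gamma_p+\gamma_q=0$. Doing the same with a third rough prime $r$ gives $\gamma_p=-\gamma_q=\gamma_r=-\gamma_p$, so $\gamma_p=0$. The inductive step then reads off $\gamma_n=0$ from the identity at a rough $n'$ having $n$ as its largest proper divisor; take $n'=nr$ with $r$ a new rough prime, and use that all other proper divisors of $n'$ vanish by induction.

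\emph{Step 2: the non-coprime part.} For a fixed $M$-smooth integer $g$ (all prime factors of $g$ divide $M$) and $k$ coprime to $M$, we have $b_{gk}=\sum_{f\mid k}\delta_g(f)$ with $\delta_g(f):=\sum_{e\mid g}\gamma_{ef}$. Both $k\mapsto b_{gk}$ and $f\mapsto\delta_g(f)$ are recurrent. The identity only needs to hold for $k$ coprime to $M$, so Step 1 applies verbatim; Dirichlet in classes coprime to $M$ is all that was used. It gives $\delta_g(f)=0$ for all rough $f>1$, with a threshold depending on $g$. Inducting on $g$ in the divisibility order, $\gamma_{gf}=0$ for rough $f>1$.

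\emph{Step 3: back to $\gamma$.} Apply Skolem--Mahler--Lech to $\gamma$ itself: its zero set is eventually a union of classes modulo some $M'$, which we may take to be a multiple of $M$. Given a class $r \bmod M'$, let $g$ be the $M$-smooth part of $\gcd(r,M'^{\infty})$. Then the class contains infinitely many integers $g\ell$ with $\ell$ a large prime, using Dirichlet in the class of $r/g$ modulo $M'/g$, which is coprime. Step 2 gives $\gamma_{g\ell}=0$ for these, so every class is eventually a zero class. Hence $\gamma$ is finitely supported.

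The main obstacle is Steps 2--3: Step 2 is only available for finitely many $g$ at a time, with thresholds depending on $g$. I expect to need the periodicity modulo $M'$ from Step 3 to reduce to finitely many $g$ before applying Step 2. I would also need to check carefully that each class modulo $M'$ really contains infinitely many integers of the form $g\cdot(\text{rough})$ with $g$ from a finite list.
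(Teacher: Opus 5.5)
Your strategy is sound and genuinely different from the paper's, but two steps fail as written.

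\emph{The induction in Step~1.} The induction on $\Omega(n)$ does not close. For $n'=nr$ with $r\nmid n$, the proper divisors $nr/p$ ($p\mid n$ prime) have the same $\Omega$ as $n$. For example, $n'=pqr$ only yields $\gamma_{pq}+\gamma_{pr}+\gamma_{qr}=0$. You could repair this by strong induction on $n$ itself, taking $r$ to be the \emph{least} prime factor of $n$; then every proper divisor of $nr$ other than $n$ is smaller than $n$. But you do not need it at all. Step~3 only uses $f=\ell$ prime, and $\gamma_p=0$ for large $p\nmid M$ follows at once from the identity at $n=p^2$.

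\emph{The choice of $g$ in Step~3.} This choice is wrong. Suppose a prime $q\mid M'$ does not divide $M$, and $r=q$. Your $g$ is then $1$, yet the class $q \bmod M'$ contains no large primes. Take $g:=\gcd(r,M')$ instead, so that $\gcd(r/g,M'/g)=1$. This $g$ need not be $M$-smooth, but $M$-smoothness is never used in Step~2:
\begin{itemize}
\item For any $g$ and any $k$ coprime to $g$, $b_{gk}=\sum_{f\mid k}\delta_g(f)$. So $w_k:=b_{gk}-\delta_g(k)-b_g$ is recurrent in $k$ and vanishes at every prime $k\nmid g$.
\item Theorem~\ref{thm:SML-rational} and Dirichlet then give $w_{p^2}=0$ for all large primes $p$. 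The zero set of $w$ is eventually periodic, every class coprime to its period contains infinitely many primes, and $p^2$ lies in such a class.
\item Comparing with $b_{gp^2}=b_g+\delta_g(p)+\delta_g(p^2)$ gives $\delta_g(p)=0$.
\item M\"obius inversion over $e\mid g$ then gives $\gamma_{gp}=0$ for all large primes $p$.
\end{itemize}
With this, the obstacle you anticipate disappears. Each class mod $M'$ uses a single $g\mid M'$ and needs only infinitely many primes $\ell$ beyond that $g$'s threshold. Eventual periodicity of the zero set of $\gamma$ then finishes the proof.

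\emph{Comparison with the paper.} The paper's proof never invokes Skolem--Mahler--Lech. It picks $m$ with $S=\sum_{d\mid m}\gamma_d\neq 0$ and specializes all recurrence data to a finite field, keeping $c_rS$ invertible. There $\bar\gamma$ is periodic and $\bar b$ is eventually periodic, with a common period $T$. A single prime $q\equiv 1\pmod T$ with $q\nmid m$ then gives $\bar b_{mq^2}-\bar b_{mq}=\sum_{e\mid m}\bar\gamma_{eq^2}=\bar S\neq 0$, contradicting periodicity. That needs only Zariski's lemma, pigeonhole, and Dirichlet for the single progression $1 \bmod T$.

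Your route is heavier. It applies full SML, with its $p$-adic interpolation, once for each divisor of $M'$ and once to $\gamma$, and it needs Dirichlet in arbitrary coprime classes. In return it yields the explicit intermediate statement $\gamma_{gp}=0$ for every fixed $g$ and all large primes $p$. It is exactly the twisted version of Proposition~\ref{prop:prime-primesquare}, which is your case $g=1$. So it shows that the prime versus prime-square comparison there can itself be pushed to a full proof, without ever relating $\gamma_{eq^2}$ back to $\gamma_e$: one final application of SML to $\gamma$ does the job.
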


The main generating-function consequence is immediate.

\begin{corollary}\label{cor:main-gf}
Let $\gamma=(\gamma_n)_{n\ge 1}\subset \C$, and write
\[
\Gamma_\gamma(z):=\sum_{n\ge 1}\gamma_n z^n,
\qquad
L_\gamma(z):=\sum_{n\ge 1}\gamma_n\frac{z^n}{1-z^n}.
\]
If both $\Gamma_\gamma(z)$ and $L_\gamma(z)$ are rational, then $\Gamma_\gamma(z)$ is a polynomial. Equivalently, among sequences with rational ordinary generating function, the only ones whose Lambert series is rational are the finitely supported sequences.
\end{corollary}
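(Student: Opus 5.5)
The corollary should follow by translating between rational generating functions and eventually linearly recurrent sequences, and then invoking Theorem~\ref{thm:main-arith}. I will use the standard fact recalled in the introduction. A power series $\sum a_n z^n\in\C[[z]]$ is rational exactly when $(a_n)$ is eventually linearly recurrent. (Rational means equal, as a formal power series, to $P/Q$ with $Q(0)\neq 0$, since the series is regular at the origin.)

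\emph{Step 1.} Assume $\Gamma_\gamma(z)$ is rational. Its coefficient sequence is $(\gamma_n)_{n\ge1}$, so $\gamma$ is eventually linearly recurrent. Here I must be careful about the constant term: $\Gamma_\gamma$ has zero constant term, so the sequence is $(\gamma_n)_{n\ge 1}$ with index shifted by one. A shift does not affect eventual linear recurrence.

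\emph{Step 2.} Assume $L_\gamma(z)$ is rational. By \eqref{eq:lambert-coeffs}, $L_\gamma(z)=\sum_{m\ge1} b_m z^m$ as formal power series, with $b_m=\sum_{d\mid m}\gamma_d$. This expansion is legitimate formally, since $z^n/(1-z^n)$ has order $n$, so only finitely many summands contribute to each coefficient. Hence $(b_m)$ is eventually linearly recurrent.

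\emph{Step 3.} Theorem~\ref{thm:main-arith} now gives that $\gamma$ is finitely supported, i.e.\ $\Gamma_\gamma$ is a polynomial.

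For the equivalence, the converse direction must also be checked. If $\gamma$ is finitely supported, then $\Gamma_\gamma$ is a polynomial and hence rational. Also $L_\gamma$ is a finite sum of the rational functions $\gamma_n z^n/(1-z^n)$ and so is rational. Together with Steps 1--3, this shows that the finitely supported sequences are exactly the sequences with rational ordinary generating function and rational Lambert series.

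No step is a real obstacle; the whole content lies in Theorem~\ref{thm:main-arith}. The only points needing care are the formal-series justification of \eqref{eq:lambert-coeffs} and the index shift in the rational/recurrent dictionary.
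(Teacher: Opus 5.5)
Your proposal is correct and matches the paper's proof: you translate rationality of $\Gamma_\gamma$ and $L_\gamma$ into eventual linear recurrence of $\gamma$ and of the divisor-sum sequence $b$ via \eqref{eq:lambert-coeffs}, apply Theorem~\ref{thm:main-arith}, and note the trivial converse for finitely supported $\gamma$. Your remarks on the index shift and on the formal validity of the Lambert expansion are details the paper leaves implicit.
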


For classical background on Lambert series see Hardy and Wright \cite{HardyWright} and the survey of Schmidt \cite{Schmidt}. For linear recurrence sequences see Everest, van der Poorten, Shparlinski, and Ward \cite{Everest}. For the Skolem-Mahler-Lech theorem and its variants see Lech \cite{Lech}, Bell \cite{Bell}, Adamczewski and Bell \cite{AdamczewskiBell}, and van der Poorten and Schlickewei \cite{PoortenSchlickewei}.

\section{Linear recurrences and the Skolem-Mahler-Lech viewpoint}

We say that a sequence $u=(u_n)_{n\ge 0}$ is \emph{eventually linearly recurrent} if there exist $r\ge 1$, coefficients $c_1,\dots,c_r\in \C$ with $c_r\neq 0$, and an index $N$ such that
\begin{equation}\label{eq:recurrence-start}
u_{n+r}=c_1u_{n+r-1}+\cdots+c_r u_n \qquad (n\ge N).
\end{equation}
If one may take $N=0$, we say that the recurrence holds \emph{from the start}.

\begin{lemma}\label{lem:rational-recurrence}
For a complex sequence $(u_n)_{n\ge 0}$, the following are equivalent.
\begin{enumerate}
\item The generating function $\sum_{n\ge 0} u_n z^n$ is rational.
\item The sequence $(u_n)$ is eventually linearly recurrent.
\end{enumerate}
Moreover, if $(u_n)$ is eventually linearly recurrent, then one can write
\[
u=v+f,
\]
where $f$ is finitely supported and $v$ is linearly recurrent from the start.
\end{lemma}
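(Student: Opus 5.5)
The plan is to prove both implications by comparing coefficients after clearing denominators, and then to obtain the decomposition from polynomial division.

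For (2)$\Rightarrow$(1), assume the recurrence \eqref{eq:recurrence-start} holds for $n\ge N$. Put $Q(z):=1-c_1z-\cdots-c_rz^r$ and $U(z):=\sum_{n\ge 0}u_nz^n$. The coefficient of $z^{m}$ in $Q(z)U(z)$ is $u_m-c_1u_{m-1}-\cdots-c_ru_{m-r}$, with the convention $u_j=0$ for $j<0$. For $m\ge N+r$ this vanishes by the recurrence. Hence $P:=QU$ is a polynomial of degree $<N+r$, and $U=P/Q$ is rational.

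For (1)$\Rightarrow$(2), write $U=P/Q$ with $Q(0)\ne 0$; this is possible because $U$ is a power series, so after cancelling common factors $z$ cannot divide the denominator. Normalize $Q(0)=1$ and write $Q(z)=1-c_1z-\cdots-c_rz^r$. If $Q$ is constant, $U$ is a polynomial. In that case $u$ is finitely supported and satisfies, say, $u_{n+1}=u_n$ for $n$ beyond its support, so we may take $r=1$ and $c_1=1\neq0$. Otherwise, drop trailing zero coefficients so that $c_r\ne0$, with $r=\deg Q$. Comparing coefficients in $QU=P$ at $z^m$ for $m>\deg P$ gives $u_m=\sum_{i=1}^r c_iu_{m-i}$. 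This is \eqref{eq:recurrence-start} with $N=\max(0,\deg P-r+1)$.

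For the decomposition, take $U=P/Q$ as above with $\deg Q=r\ge1$ and $c_r\neq0$; the polynomial case is covered by $f=u$, $v=0$. Perform Euclidean division $P=AQ+R$ with $\deg R<r$, so that
\[
U=A+\frac{R}{Q}.
\]
Let $f$ be the coefficient sequence of the polynomial $A$, which is finitely supported, and $v$ that of $R/Q$. Since $Q\cdot(R/Q)=R$ has degree $<r$, the coefficient comparison above at every $m\ge r$ gives $v_m=\sum_{i=1}^rc_iv_{m-i}$. Because $v_j=0$ for $j<0$ does not enter once $m\ge r$, this is the recurrence with $N=0$, i.e.\ from the start.

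The only point needing care is the bookkeeping: ensuring $c_r\ne 0$ by taking $r=\deg Q$, arranging $Q(0)=1$ through reduction to lowest terms, and handling the degenerate polynomial case separately. Nothing here is a real obstacle; the argument is standard.
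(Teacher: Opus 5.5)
Your proof is correct, but you reach the decomposition by a different mechanism than the paper. The paper cites the equivalence (1)$\Leftrightarrow$(2) as classical and proves the final assertion purely at the level of sequences. It keeps $v_n=u_n$ for $n\ge N$ and uses $c_r\neq 0$ to run the recurrence backward for $0\le n<N$. As a result $v$ satisfies the \emph{same} recurrence as $u$ from $n=0$, and $f=u-v$ is supported in $\{0,\dots,N-1\}$.

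You instead prove both implications by comparing coefficients in $QU=P$. You then split off the finitely supported part by Euclidean division $P=AQ+R$, so that $f$ is the polynomial part $A$ and $v$ is the proper part $R/Q$. In your version the hypothesis $c_r\neq 0$ appears as $\deg Q=r$, which is exactly what turns $\deg R<r$ into a recurrence valid for every $m\ge r$.

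The two constructions coincide when the same $Q$ is used. The paper's $v$ satisfies the recurrence from the start, so $Q\sum_n v_nz^n$ is a polynomial of degree $<r$, and uniqueness of division identifies it with your $R$.

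The paper's route is shorter and self-contained: it needs no reduction to lowest terms and preserves the original recurrence. Yours makes the classical equivalence explicit and identifies $f$ intrinsically as the polynomial part of the rational function. The later application in the proof of Theorem~\ref{thm:main-arith} only needs \emph{some} recurrence from the start for $\eta$, so either version suffices.
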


\begin{proof}
The equivalence is classical; see, for example, \cite[Chapter~3]{Everest}. For the final assertion, choose $N,r$ and $c_1,\dots,c_r$ with $c_r\neq 0$ such that \eqref{eq:recurrence-start} holds for all $n\ge N$. Define $v_n:=u_n$ for $n\ge N$. Since $c_r\neq 0$, the recurrence can be solved backward:
\[
v_n=c_r^{-1}\bigl(v_{n+r}-c_1v_{n+r-1}-\cdots-c_{r-1}v_{n+1}\bigr)
\qquad (0\le n<N).
\]
Then $v$ satisfies the same recurrence for every $n\ge 0$, and $f:=u-v$ is finitely supported.
\end{proof}

\begin{theorem}[Skolem-Mahler-Lech for rational power series]\label{thm:SML-rational}
Let
\[
F(z)=\sum_{n\ge 0} a_n z^n\in \C(z).
\]
Then the zero set
\[
Z(F):=\{n\ge 0:a_n=0\}
\]
is a finite union of arithmetic progressions together with a finite set. In particular, $Z(F)$ is eventually periodic.
\end{theorem}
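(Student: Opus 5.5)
We will deduce this from the classical Skolem-Mahler-Lech theorem for linear recurrence sequences, via Lemma~\ref{lem:rational-recurrence}, and sketch the standard $p$-adic proof of that theorem. Since $F$ is rational, Lemma~\ref{lem:rational-recurrence} lets us write $a=v+f$ with $f$ finitely supported and $v$ linearly recurrent from the start. Then $Z(F)$ and $\{n: v_n=0\}$ agree outside a finite set, so it suffices to treat $v$. If $v$ satisfies \eqref{eq:recurrence-start} with $N=0$, we form the companion matrix $A\in GL_r(\C)$; it is invertible because $c_r\neq 0$. We then have $v_n=\langle e,A^n w\rangle$ for suitable vectors $e,w$.

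The key steps follow the Skolem-Mahler-Lech method.
\begin{enumerate}
\item Let $R\subset\C$ be the finitely generated $\Z$-algebra generated by the $c_i$, by $c_r^{-1}$ and by the entries of $e$ and $w$. We choose a prime $p$ and an embedding $R\hookrightarrow\Z_p$ with $c_r$ mapping to a unit. This uses the Lech/Cassels embedding lemma: a finitely generated domain embeds into $\Z_p$ for infinitely many $p$.
\item Since $A$ is invertible modulo $p$, it has finite order in $GL_r(\F_p)$. Hence for some $M\ge1$ we have $A^M\equiv I\pmod p$, and we may take $p$ odd.
\item For each residue $j\in\{0,\dots,M-1\}$ we write $A^M=I+pB$ with $B$ having $\Z_p$-entries. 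The binomial expansion shows that $k\mapsto v_{j+kM}=\langle e,A^j(I+pB)^k w\rangle$ extends to a function $\phi_j(x)$ on $\Z_p$ given by a power series converging on $\Z_p$. This works because the terms $p^m\binom{x}{m}$, or equivalently $\exp(x\log(I+pB))$, have coefficients tending to $0$.
\item By Strassmann's theorem, a nonzero power series convergent on $\Z_p$ has finitely many zeros there. So for each $j$, either $\phi_j\equiv0$, in which case the whole progression $j+M\Z_{\ge0}$ lies in the zero set, or only finitely many $k$ give $v_{j+kM}=0$.
\end{enumerate}
Taking the union over $j$ shows that the zero set is a union of arithmetic progressions with common difference $M$ together with a finite set. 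This set is eventually periodic with period $M$.

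We expect the main obstacle to be the embedding step (1) together with the analytic interpolation in step (3). Specifically, we must verify that the entries are $p$-adic integers, that $c_r$ is a unit, and that the resulting series converge on all of $\Z_p$ rather than just on a disc. Our first attempt will be the explicit Mahler-series form $\sum_m \binom{x}{m}p^m\langle e,A^jB^m w\rangle$, whose coefficients are visibly $O(p^m)$. We would then rewrite it as a power series in $x$ with coefficients tending to $0$, using $|p^m/m!|_p\to0$ for odd $p$. Since this theorem is classical, the paper can also simply cite it \cite{Lech,Everest}, with only the reduction from rational $F$ to a recurrence from the start requiring comment.
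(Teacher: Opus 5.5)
Your proposal is correct and follows the same route as the paper's sketch. You reduce to a recurrence that holds from the start via Lemma~\ref{lem:rational-recurrence}, pass to a $p$-adic setting in which $A^M\equiv I$, interpolate each residue class modulo $M$ by a binomial series, and conclude with Strassmann's theorem. Your use of the Cassels embedding $R\hookrightarrow\Z_p$ with $p$ odd is a more explicit version of the paper's ``good nonarchimedean place''. It also makes explicit why the interpolating functions are power series convergent on all of $\Z_p$, through the estimate $|p^m/m!|_p\to 0$, and not merely convergent at integer points.
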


\begin{proof}[Proof sketch]
By Lemma \ref{lem:rational-recurrence}, after subtracting a polynomial we may assume that $(a_n)$ satisfies a recurrence from the start, so that
\[
a_n=\ell(A^n v)
\qquad (n\ge 0)
\]
for some matrix $A\in \operatorname{GL}_r(K)$, vector $v\in K^r$, and linear form $\ell$, where $K\subset \C$ is a finitely generated field over $\Q$ containing all the data.

Choose a good nonarchimedean place of $K$ at which the entries of $A$, $A^{-1}$, $v$, and $\ell$ are integral. Passing to the corresponding completion, let $\pi$ be a uniformizer. Since the residue field is finite and $\bar A$ is invertible, some power $A^M$ satisfies
\[
A^M=I+\pi C
\]
with $C$ integral.

For each residue class $j\bmod M$, the subsequence $a_{Mn+j}$ is interpolated by a $p$-adic analytic function on $\Z_p$:
\[
f_j(x):=\ell\bigl(A^j(A^M)^x v\bigr),
\qquad
(A^M)^x:=\sum_{m\ge 0}\binom{x}{m}(A^M-I)^m.
\]
The binomial series converges because $A^M-I$ is $\pi$-adically small, and for every integer $n\ge 0$ one has $f_j(n)=a_{Mn+j}$.

If $a_{Mn+j}=0$ for infinitely many $n$, then $f_j$ has infinitely many zeros in $\Z_p$, hence vanishes identically by Strassmann's theorem. Therefore the entire progression $Mn+j$ belongs to $Z(F)$. Since there are only finitely many residue classes modulo $M$, the zero set is a finite union of arithmetic progressions up to finitely many initial exceptions. See \cite{Lech}, \cite{Bell}, or \cite[\S1]{AdamczewskiBell} for fuller accounts.
\end{proof}

\begin{remark}\label{rem:SML-nondegenerate}
A standard refinement says that if the recurrence is \emph{nondegenerate}, meaning that no quotient of two distinct characteristic roots is a root of unity, then the sequence has only finitely many zeros; see, for instance, \cite[Chapter~3]{Everest} or \cite{PoortenSchlickewei}. The arithmetic progressions in Theorem \ref{thm:SML-rational} arise precisely from root-of-unity relations among characteristic roots.
\end{remark}

\section{Two consequences for rational power series}

We first record the structural consequence for prime-index coefficients.

\begin{proposition}\label{prop:prime-support}
Let
\[
F(z)=\sum_{n\ge 0} a_n z^n\in \C(z),
\]
and assume that $a_p=0$ for all sufficiently large primes $p$. Then there exist a polynomial $P(z)$, integers $d_1,\dots,d_m>1$, and rational functions $H_1,\dots,H_m\in \C(z)$ such that
\[
F(z)=P(z)+\sum_{j=1}^m H_j(z^{d_j}).
\]
In general one cannot take a single $d>1$.
\end{proposition}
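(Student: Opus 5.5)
We start from Theorem~\ref{thm:SML-rational}: the zero set $Z(F)$ is, up to a finite set, a union of full residue classes modulo some $M\ge 1$. By hypothesis $Z(F)$ contains all sufficiently large primes. By Dirichlet's theorem, every residue class $j \bmod M$ with $\gcd(j,M)=1$ contains infinitely many primes. Each such class therefore meets $Z(F)$ infinitely often, and by the eventual periodicity it must be eventually contained in $Z(F)$. So there is $N$ such that $a_n=0$ whenever $n\ge N$ and $\gcd(n,M)=1$. In particular, if $M=1$ then $F$ is a polynomial and we are done with $m=0$.

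Next we split the remaining coefficients by residue class. For each $j\in\{0,\dots,M-1\}$ with $g_j:=\gcd(j,M)>1$ (reading $\gcd(0,M)=M$), put
\[
F_j(z):=\sum_{n\equiv j \ (M)} a_n z^n .
\]
Each $F_j$ is rational. Indeed, $F_j(z)=\frac1M\sum_{\zeta^M=1}\zeta^{-j}F(\zeta z)$ is a finite linear combination of rational functions. Up to a polynomial, $F$ is the sum of these $F_j$, because the coprime classes contribute only finitely many terms.

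It remains to write each $F_j$ as $H_j(z^{g_j})$. The exponents $n\equiv j \pmod M$ are all divisible by $g_j$, since $g_j\mid j$ and $g_j\mid M$. Hence $F_j(z)=H_j(z^{g_j})$ with $H_j(w)=\sum_k a_{kg_j}[kg_j\equiv j]\,w^k$. To see that $H_j$ is rational, write $F_j=A/B$ and average over the $g_j$-th roots of unity. Since $F_j(\omega z)=F_j(z)$ for $\omega^{g_j}=1$, we get
\[
F_j(z)=\frac{A(z)\prod_{\omega\ne1}B(\omega z)}{\prod_{\omega}B(\omega z)} .
\]
The denominator is a polynomial in $z^{g_j}$. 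The numerator is then $F_j$ times that denominator, a power series in $z^{g_j}$ which is also a polynomial, so it too is a polynomial in $z^{g_j}$. This gives $H_j\in\C(w)$ with $d_j=g_j>1$.

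For the final sentence we need a counterexample to a single $d>1$. Take
\[
F(z)=\frac{1}{1-z^2}+\frac{1}{1-z^3}.
\]
Its coefficients are supported on $2\Z_{\ge0}\cup 3\Z_{\ge0}$, so they vanish at all primes $p>3$. Suppose $F=P(z)+H(z^d)$ with $d>1$. Then the coefficients of $F$ would vanish for all large $n\not\equiv 0\pmod d$. But $a_{2k}\ne 0$ and $a_{3k}\ne0$ for every $k$ (the coefficients are positive). Any $d>1$ fails to divide either infinitely many even numbers or infinitely many multiples of $3$, which is a contradiction.

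The main obstacle is the step that upgrades "all large primes are zeros" to "entire coprime residue classes are eventually zeros". This is exactly where Skolem--Mahler--Lech is combined with Dirichlet's theorem. The rationality of $H_j$ is routine once the roots-of-unity averaging is set up.
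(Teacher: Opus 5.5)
Your proof is correct and follows essentially the same route as the paper: Skolem--Mahler--Lech combined with Dirichlet's theorem shows the eventual support lies in residue classes $j \bmod M$ with $\gcd(j,M)>1$, the roots-of-unity filter makes each class rational, and each class is then a rational function of $z^{\gcd(j,M)}$. The differences are minor. You justify the rationality of $H_j$ explicitly via $\prod_{\omega}B(\omega z)$, a step the paper only asserts when it writes $F_r(z)=z^rG_r(z^k)$. Your counterexample $\frac{1}{1-z^2}+\frac{1}{1-z^3}$ is equally valid, since the hypothesis only concerns sufficiently large primes.
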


\begin{proof}
By Theorem \ref{thm:SML-rational}, the support
\[
S:=\{n\ge 0:a_n\neq 0\}
\]
is eventually periodic. Thus there exist $k\ge 1$, a finite set $E\subset \Z_{\ge 0}$, and residues $r_1,\dots,r_t\in \{0,1,\dots,k-1\}$ such that for all sufficiently large $n$,
\[
n\in S \iff n\equiv r_j \pmod{k}\ \text{for some }j.
\]
If one of the residues $r_j$ is coprime to $k$, then Dirichlet's theorem gives infinitely many primes $p\equiv r_j\pmod{k}$ \cite[Theorem~15]{HardyWright}. For all sufficiently large such primes we would have $a_p\neq 0$, contradicting the hypothesis. Hence every eventual support residue class satisfies
\[
\gcd(r_j,k)>1.
\]

Let $P(z)$ collect the finitely many terms with indices in $E$ and those below the eventual periodic range. For each residue class $r\bmod k$, define
\[
F_r(z):=\sum_{n\equiv r\,({\rm mod}\, k)} a_n z^n.
\]
By the roots-of-unity filter,
\[
F_r(z)=\frac1k\sum_{j=0}^{k-1}\omega^{-jr}F(\omega^j z),
\qquad \omega=e^{2\pi i/k},
\]
so each $F_r$ is rational. Writing
\[
F_r(z)=z^r G_r(z^k)
\]
for a rational function $G_r$, we obtain
\[
F(z)=P(z)+\sum_{\gcd(r,k)>1} z^r G_r(z^k).
\]
If $d_r:=\gcd(r,k)>1$, then $r=d_r r'$ and $k=d_r k'$, so
\[
z^r G_r(z^k)=(z^{d_r})^{r'}G_r\bigl((z^{d_r})^{k'}\bigr),
\]
which is a rational function of $z^{d_r}$. This yields the required decomposition.
\end{proof}

\begin{example}\label{ex:single-d-false}
The single-$d$ version is false. Consider
\[
F(z)=\frac{z^4}{1-z^2}+\frac{z^9}{1-z^3}.
\]
Its coefficients vanish at every prime index: $a_2=a_3=0$, and for every prime $p>3$ one has $p\not\equiv 0\pmod 2$ and $p\not\equiv 0\pmod 3$, hence $a_p=0$. But $F$ is not of the form $P(z)+G(z^d)$ for a single $d>1$, because its support contains infinitely many powers of $2$ and infinitely many powers of $3$.
\end{example}

We next isolate the dominant-pole obstruction to infinitely many zeros.

\begin{proposition}\label{prop:dominant-poles}
Let
\[
F(z)=\sum_{n\ge 0} a_n z^n\in \C(z),
\]
and suppose that $a_n=0$ for infinitely many $n$. Then among the poles of $F$ having minimum modulus, at least two have quotient a root of unity.
\end{proposition}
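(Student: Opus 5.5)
The plan is to combine Theorem \ref{thm:SML-rational} with the uniqueness of the exponential-polynomial representation of a recurrence sequence; this avoids any asymptotic analysis. One caveat about the statement comes first. If $F$ is a polynomial, it has no poles, and the statement is vacuous or false. So I will assume $F$ has at least one pole, i.e.\ $F$ is not a polynomial. This is implicitly intended.

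First, by Lemma \ref{lem:rational-recurrence} and partial fractions, for all $n\ge N$ I write
\[
a_n=\sum_{i=1}^{s} P_i(n)\,\alpha_i^n,
\]
where $\alpha_1,\dots,\alpha_s$ are the distinct reciprocals of the poles of $F$ and each $P_i$ is a nonzero polynomial. Poles of minimum modulus correspond exactly to the $\alpha_i$ of maximum modulus. Fix one such dominant root, say $\alpha_1$. Since $a_n=0$ for infinitely many $n$, Theorem \ref{thm:SML-rational} gives $M\ge1$ and a residue $j$ such that $a_{Mn+j}=0$ for all sufficiently large $n$. This holds because infinitely many zeros must lie in some residue class modulo the period, and eventual periodicity then forces the whole tail of that class to vanish.

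Next, substitute $n\mapsto Mn+j$ to get
\[
0=\sum_{i=1}^s P_i(Mn+j)\,\alpha_i^{j}\,(\alpha_i^M)^n \qquad (n\gg 0).
\]
I group the indices $i$ according to the value $\beta=\alpha_i^M$. For each distinct $\beta$ this produces the polynomial $Q_\beta(n)=\sum_{\alpha_i^M=\beta}\alpha_i^j P_i(Mn+j)$. I then invoke the standard uniqueness fact: if $\sum_\beta Q_\beta(n)\beta^n=0$ for all large $n$ with distinct nonzero $\beta$'s, then every $Q_\beta$ is zero. This follows from linear independence of the functions $n^k\beta^n$, or equivalently from uniqueness of partial fractions of the generating function. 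Hence $Q_{\alpha_1^M}\equiv 0$.

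Finally, if $\alpha_1$ were the only $\alpha_i$ with $\alpha_i^M=\alpha_1^M$, then $Q_{\alpha_1^M}(n)=\alpha_1^jP_1(Mn+j)$. This is a nonzero polynomial, because $\alpha_1\ne0$ and $P_1\ne0$, which is a contradiction. So some $\alpha_k\ne\alpha_1$ satisfies $\alpha_k^M=\alpha_1^M$. Then $|\alpha_k|=|\alpha_1|$, so $\alpha_k$ is also dominant, and $\alpha_k/\alpha_1$ is an $M$-th root of unity. The corresponding poles $1/\alpha_1$ and $1/\alpha_k$ both have minimum modulus, and their quotient is a root of unity.

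The main subtle step is the passage from ``infinitely many zeros'' to ``an entire progression of zeros,'' which is exactly what Theorem \ref{thm:SML-rational} supplies. Without it, one would need a delicate argument that $\sum c_i\zeta_i^n$, with $|\zeta_i|=1$ and multiplicatively independent quotients, cannot tend to $0$ along a subsequence, and that is not even true pointwise in general. The uniqueness step is routine. The argument in fact proves the stronger claim that every dominant root shares a root-of-unity relation with another dominant root.
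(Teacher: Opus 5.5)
Your proof is correct and matches the paper's argument: Theorem~\ref{thm:SML-rational} supplies a full vanishing progression $Mn+j$, and uniqueness of the exponential-polynomial representation, after grouping the roots by $\alpha_i^M$, forces a dominant root to share its $M$-th power with another root of the same modulus. Your caveat that $F$ must have at least one pole is legitimate: the statement fails for polynomials, and the paper tacitly assumes this when it sets $\rho=\max_i|\lambda_i|$.
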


\begin{proof}
By Theorem \ref{thm:SML-rational}, there exist integers $M\ge 1$ and $r\ge 0$ such that
\[
a_{Mn+r}=0
\qquad (n\ge 0)
\]
after discarding finitely many initial terms. Write the coefficient sequence in exponential-polynomial form
\[
a_n=\sum_{i=1}^s P_i(n)\lambda_i^n,
\]
where the $\lambda_i\in \C^\times$ are distinct and the $P_i\in \C[n]$ are nonzero. Let
\[
\rho:=\max_i |\lambda_i|.
\]
The poles of minimum modulus are precisely the reciprocals of those $\lambda_i$ with modulus $\rho$.

Expanding along the progression $Mn+r$ gives
\[
0=a_{Mn+r}=\sum_{i=1}^s Q_i(n)(\lambda_i^M)^n,
\]
where $Q_i(n)=\lambda_i^r P_i(Mn+r)$ is a nonzero polynomial. Suppose that among the dominant roots $|\lambda_i|=\rho$, the numbers $\lambda_i^M$ are pairwise distinct. Grouping together equal bases among the $\lambda_i^M$, we obtain an identity
\[
\sum_{\mu} R_\mu(n)\mu^n=0
\qquad (n\ge 0),
\]
with distinct nonzero bases $\mu$ and polynomial coefficients $R_\mu$. Such an exponential-polynomial representation is unique; equivalently, the corresponding partial-fraction expansion of the generating function is unique. Hence every $R_\mu$ must vanish identically. In particular the dominant terms vanish, contradicting the nonzero polynomials $Q_i$ attached to them. Therefore two dominant roots satisfy
\[
\lambda_i^M=\lambda_j^M,
\]
so $\lambda_i/\lambda_j$ is a root of unity. Passing to reciprocals gives the corresponding statement for poles.
\end{proof}

\begin{example}\label{ex:irrational-poles}
Irrationally related dominant poles may coexist with infinitely many zeros, but they are not the mechanism producing those zeros. Fix $\theta/\pi\notin \Q$ and set
\[
a_n=(1-(-1)^n)(1+e^{in\theta}).
\]
Then $a_{2m}=0$ for every $m$, while
\[
\sum_{n\ge 0} a_n z^n
=\frac{1}{1-z}+\frac{1}{1-e^{i\theta}z}-\frac{1}{1+z}-\frac{1}{1+e^{i\theta}z}.
\]
The four poles all have modulus $1$, and the ratio between $1$ and $e^{-i\theta}$ is not a root of unity. Nevertheless, there are root-of-unity pairs $1,-1$ and $e^{-i\theta},-e^{-i\theta}$, and these are what force the vanishing on the even progression.
\end{example}

\section{Finite specialization and periodicity over finite fields}

The proof of the Lambert-series theorem uses the same finite-place setup as the proof sketch of Theorem \ref{thm:SML-rational}, but it only needs reduction to a finite residue field.

\begin{lemma}[finite specialization]\label{lem:finite-specialization}
Let $A\subset \C$ be a finitely generated domain over $\Z$, and let $s\in A\setminus\{0\}$. Then there exist a finite field $k$ and a ring homomorphism
\[
\varphi:A[s^{-1}]\longrightarrow k.
\]
In particular, one may reduce any finite amount of data from $A[s^{-1}]$ modulo a finite field while keeping the image of $s$ nonzero.
\end{lemma}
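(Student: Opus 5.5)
We plan to derive the lemma from standard commutative algebra, namely the Nullstellensatz for finitely generated $\Z$-algebras (Jacobson rings). Put $B:=A[s^{-1}]$. Since $A\subset \C$ is a domain and $s\neq 0$, $B$ is a nonzero domain, and it is finitely generated over $\Z$: adjoin one further generator $t$ with $st=1$. Hence $B$ has a maximal ideal $\mathfrak m$. Let $k:=B/\mathfrak m$ and let $\varphi$ be the quotient map. The whole proof then reduces to showing that $k$ is finite.

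For finiteness we use the arithmetic form of the Nullstellensatz. The field $k$ is a finitely generated $\Z$-algebra. First, the characteristic of $k$ cannot be $0$. Otherwise $\Q\subset k$ and $k$ is a finitely generated $\Q$-algebra, so by Zariski's lemma $k$ is a finite extension of $\Q$. Write generators $x_1,\dots,x_n$ of $k$ over $\Z$. Their minimal polynomials over $\Q$ have coefficients with finitely many denominators, so there is an integer $N$ such that every $x_i$ is integral over $\Z[1/N]$. Then $k$ is integral over $\Z[1/N]$, and a field integral over a subring forces that subring to be a field. But $\Z[1/N]$ is not a field, which is a contradiction. Hence $\operatorname{char} k=p>0$. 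Then $k$ is a finitely generated $\F_p$-algebra, so Zariski's lemma makes it a finite extension of $\F_p$, hence a finite field.

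The step "in particular, keeping $s$ nonzero" is automatic. Since $s$ is a unit in $B$, $\varphi(s)\varphi(s^{-1})=1$, so $\varphi(s)\neq 0$. Any finite amount of data lying in $B$ (entries of matrices, coefficients, etc.) then reduces via $\varphi$. Elements that we also want to remain nonzero can be absorbed by replacing $s$ with their product together with $s$.

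The main obstacle is the characteristic-zero exclusion, i.e. that $\Q$-type fields are never finitely generated as rings. We plan to handle it with the integrality argument above, citing the Jacobson-ring form of the Nullstellensatz (e.g. Eisenbud, Thm.~4.19) as an alternative.
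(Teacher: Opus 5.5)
Your proof is correct and follows the same route as the paper: take a maximal ideal of $A[s^{-1}]$, note that the residue field is a finitely generated $\Z$-algebra, show its characteristic is a prime $p$, and conclude by Zariski's lemma over $\F_p$. The one difference is that you justify excluding characteristic $0$ (Zariski's lemma over $\Q$ plus integrality over $\Z[1/N]$), a step the paper simply asserts.
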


\begin{proof}
The localization $A[s^{-1}]$ is a nonzero finitely generated $\Z$-algebra, so it has a maximal ideal $\mathfrak m$. Then $k:=A[s^{-1}]/\mathfrak m$ is a field finitely generated as a $\Z$-algebra. Its characteristic is some prime $p$, and $k$ is therefore a finitely generated algebra over $\F_p$. Since $k$ is also a field, Zariski's lemma implies that $k$ is a finite extension of $\F_p$, hence a finite field; see, for example, Atiyah--Macdonald \cite{AtiyahMacdonald} or Eisenbud \cite{Eisenbud}.
\end{proof}

\begin{lemma}[periodicity over finite fields]\label{lem:periodicity-finite-field}
Let $k$ be a finite field.
\begin{enumerate}
\item If a sequence $(u_n)_{n\ge 0}\subset k$ satisfies a recurrence
\[
u_{n+r}=a_1u_{n+r-1}+\cdots+a_r u_n \qquad (n\ge 0)
\]
with $a_r\neq 0$, then $(u_n)$ is periodic.
\item If a sequence $(w_n)_{n\ge 0}\subset k$ satisfies a recurrence
\[
w_{n+s}=d_1w_{n+s-1}+\cdots+d_s w_n \qquad (n\ge N)
\]
for some $N$, then $(w_n)$ is eventually periodic.
\end{enumerate}
\end{lemma}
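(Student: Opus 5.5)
The plan is a pigeonhole argument on the finite state space of consecutive $r$-tuples (resp. $s$-tuples), followed by backward invertibility for part (1).

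For part (1), consider the state vectors
\[
X_n:=(u_n,u_{n+1},\dots,u_{n+r-1})\in k^r \qquad (n\ge 0).
\]
The recurrence says $X_{n+1}=\Phi(X_n)$ for the map
\[
\Phi(x_0,\dots,x_{r-1}):=\bigl(x_1,\dots,x_{r-1},\,a_1x_{r-1}+\cdots+a_rx_0\bigr).
\]
Since $k^r$ is finite, with $|k|^r$ elements, the pigeonhole principle gives indices $i<j$ with $X_i=X_j$. Determinism of $\Phi$ then yields $X_{i+m}=X_{j+m}$ for all $m\ge 0$, so $(u_n)$ is eventually periodic with period $j-i$.

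To upgrade this to pure periodicity, I will use $a_r\neq 0$. This makes $\Phi$ injective: from $\Phi(x)$ one recovers
\[
x_0=a_r^{-1}\bigl(y_{r-1}-a_1x_{r-1}-\cdots-a_{r-1}x_1\bigr),
\]
where $y_{r-1}$ is the last coordinate of $\Phi(x)$ and $x_1,\dots,x_{r-1}$ are its first $r-1$ coordinates. So $\Phi$ is a bijection of the finite set $k^r$. Hence, if $i\ge 1$, applying $\Phi^{-1}$ to $X_i=X_j$ gives $X_{i-1}=X_{j-1}$. Iterating, we reach $X_0=X_{j-i}$, and then $X_n=X_{n+(j-i)}$ for all $n\ge 0$. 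Equivalently, $\Phi$ is an element of finite order in the symmetric group on $k^r$, so $X_n=\Phi^n(X_0)$ is periodic. Reading off first coordinates gives periodicity of $(u_n)$.

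For part (2), the hypothesis does not require $d_s\neq 0$ and the recurrence only starts at $N$, so I will not invoke part (1). Instead I will repeat the first half of the argument for the shifted sequence $w'_n:=w_{n+N}$. It satisfies the recurrence from the start, with the analogous map $\Psi$ on $k^s$, which need not be invertible. The pigeonhole argument above used only determinism and finiteness, so it again gives $X'_i=X'_j$ for some $i<j$, and hence eventual periodicity of $(w'_n)$. Shifting back by $N$ gives eventual periodicity of $(w_n)$.

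The degenerate case $s=0$ would mean $w_n=0$ for $n\ge N$, which is trivially eventually periodic; I do not expect this case to arise anyway. There is no real obstacle in this lemma. The only point requiring care is the invertibility step in part (1), which is exactly where $a_r\neq 0$ enters.
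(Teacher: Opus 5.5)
Your proof is correct and takes essentially the paper's route: state vectors in the finite set $k^r$ (resp.\ $k^s$), with $a_r\neq 0$ making the transition map invertible in part (1), and determinism plus finiteness giving eventual periodicity in part (2). The only cosmetic difference is that the paper gets invertibility from the companion matrix ($\det M=(-1)^{r+1}a_r\neq 0$) and uses finiteness of $\mathrm{GL}_r(k)$ to obtain $M^T=I$, whereas you solve backward explicitly and use pigeonhole, or equivalently finiteness of the symmetric group on $k^r$.
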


\begin{proof}
For the first claim, set
\[
U_n:=(u_n,u_{n+1},\dots,u_{n+r-1})\in k^r.
\]
Then $U_{n+1}=MU_n$, where $M$ is the companion matrix of the recurrence. Since $\det M=(-1)^{r+1}a_r\neq 0$, we have $M\in \mathrm{GL}_r(k)$. The group $\mathrm{GL}_r(k)$ is finite, so $M^T=I$ for some $T\ge 1$, and therefore $U_{n+T}=U_n$ for every $n\ge 0$.

For the second claim, form the state vectors
\[
W_n:=(w_n,w_{n+1},\dots,w_{n+s-1})\in k^s \qquad (n\ge N).
\]
These evolve by a deterministic map $W_{n+1}=\Psi(W_n)$ on the finite set $k^s$. Hence the orbit of $W_N$ is eventually periodic, and so is $(w_n)$.
\end{proof}

\section{Proof of the Lambert-series rigidity theorem}

\begin{proof}[Proof of Theorem \ref{thm:main-arith}]
Assume that $\gamma$ and $b$ are eventually linearly recurrent, where
\[
b_n=\sum_{d\mid n}\gamma_d.
\]
We must show that $\gamma$ is finitely supported.

By Lemma \ref{lem:rational-recurrence}, write $\gamma=\eta+f$ with $f$ finitely supported and $\eta$ linearly recurrent from the start. Then
\[
\sum_{d\mid n}\eta_d=b_n-\sum_{d\mid n}f_d.
\]
The second term on the right is periodic, because if $f_d=0$ for $d>N$ then
\[
\sum_{d\mid n}f_d=\sum_{d\le N} f_d\,\mathbf{1}_{d\mid n}.
\]
Hence the divisor-sum sequence of $\eta$ is still eventually linearly recurrent. Since $\gamma$ is finitely supported if and only if $\eta$ is, we may replace $\gamma$ by $\eta$ and assume from now on that
\begin{equation}\label{eq:gamma-recurrence}
\gamma_{n+r}=c_1\gamma_{n+r-1}+\cdots+c_r\gamma_n \qquad (n\ge 1)
\end{equation}
for some $r\ge 1$ and $c_r\neq 0$.

Suppose for contradiction that $\gamma$ is not finitely supported. Then $\gamma\neq 0$, so by M\"obius inversion there exists $m\ge 1$ such that
\begin{equation}\label{eq:S-def}
S:=\sum_{d\mid m}\gamma_d\neq 0.
\end{equation}
Indeed, if all such divisor sums were zero, then
\[
\gamma_n=\sum_{d\mid n}\mu(d)\sum_{e\mid n/d}\gamma_e=0
\qquad (n\ge 1).
\]

Let $N,s$ and $d_1,\dots,d_s\in \C$ be such that
\begin{equation}\label{eq:b-recurrence}
b_{n+s}=d_1b_{n+s-1}+\cdots+d_s b_n \qquad (n\ge N).
\end{equation}
Choose a finitely generated domain $A\subset \C$ over $\Z$ containing
\[
\{c_1,\dots,c_r,\gamma_1,\dots,\gamma_r, d_1,\dots,d_s, b_N,\dots,b_{N+s-1}, S\}.
\]
By \eqref{eq:gamma-recurrence} and \eqref{eq:b-recurrence}, every $\gamma_n$ and every $b_n$ lies in $A[c_r^{-1}]$. Applying Lemma \ref{lem:finite-specialization} to the localization
\[
R:=A[(c_rS)^{-1}],
\]
we obtain a finite field $k$ and a homomorphism
\[
\varphi:R\to k.
\]
Write bars for reduction via $\varphi$.

The reduced sequence $\bar\gamma$ satisfies the recurrence \eqref{eq:gamma-recurrence} over $k$ with trailing coefficient $\bar c_r\neq 0$. By Lemma \ref{lem:periodicity-finite-field}(1), $\bar\gamma$ is periodic; let $T_\gamma$ be a period. The reduced sequence $\bar b$ satisfies \eqref{eq:b-recurrence} for all $n\ge N$, so by Lemma \ref{lem:periodicity-finite-field}(2) it is eventually periodic; let $T_b$ be a period valid for all $n\ge N_0$.

Let $T$ be a common multiple of $T_\gamma$ and $T_b$. By Dirichlet's theorem, there exist infinitely many rational primes $q$ with $q\equiv 1\pmod T$; see, for example, \cite[Theorem~15]{HardyWright}. Choose one such prime satisfying also $q\nmid m$ and $mq\ge N_0$.

Since $mq^2\equiv mq\pmod{T_b}$ and both indices are at least $N_0$, eventual periodicity gives
\begin{equation}\label{eq:b-equal}
\bar b_{mq^2}=\bar b_{mq}.
\end{equation}
On the other hand, because $q\nmid m$,
\[
b_{mq^2}-b_{mq}
=\sum_{d\mid mq^2}\gamma_d-\sum_{d\mid mq}\gamma_d
=\sum_{e\mid m}\gamma_{eq^2}.
\]
Reducing modulo $k$ and using $q^2\equiv 1\pmod{T_\gamma}$, we obtain
\[
\bar b_{mq^2}-\bar b_{mq}
=\sum_{e\mid m}\bar\gamma_{eq^2}
=\sum_{e\mid m}\bar\gamma_e
=\overline{S}.
\]
But $S$ was inverted in $R$, so $\overline{S}\neq 0$ in $k$. This contradicts \eqref{eq:b-equal}. Therefore $\gamma$ must be finitely supported.
\end{proof}

\begin{proof}[Proof of Corollary \ref{cor:main-gf}]
A sequence has rational ordinary generating function if and only if it is eventually linearly recurrent; see \cite[Chapter~3]{Everest}. Apply Theorem \ref{thm:main-arith} to $\gamma$ and to the coefficient sequence $b$ of $L_\gamma$ given by \eqref{eq:lambert-coeffs}. If $\gamma$ is finitely supported, then both $\Gamma_\gamma(z)$ and $L_\gamma(z)$ are obviously rational, and $\Gamma_\gamma(z)$ is a polynomial.
\end{proof}

\begin{remark}\label{rem:lambert-vs-sml}
The proof of Theorem \ref{thm:main-arith} is very close in spirit to the proof sketch of Theorem \ref{thm:SML-rational}. In both arguments one reduces a recurrence to a finite field, obtains periodicity or eventual periodicity there, and then compares values along carefully chosen arithmetic patterns. The difference is that Skolem-Mahler-Lech passes from periodicity modulo a finite place to identically vanishing subsequences by means of $p$-adic interpolation and Strassmann's theorem. In the Lambert setting the special identity
\[
b_{mq^2}-b_{mq}=\sum_{e\mid m}\gamma_{eq^2}
\]
already produces a fixed nonzero quantity after reduction, so the contradiction is obtained before any $p$-adic analytic step is needed.
\end{remark}

\section{Further corollaries and examples}

We first record a partial consequence of the prime-index vanishing theorem. It is weaker than Theorem \ref{thm:main-arith}, but it already shows that prime and prime-square coefficients cannot behave independently.

\begin{proposition}[prime and prime-square coefficients]\label{prop:prime-primesquare}
Let $\gamma=(\gamma_n)_{n\ge 1}\subset \C$, and put
\[
b_n:=\sum_{d\mid n}\gamma_d \qquad (n\ge 1).
\]
Assume that both $\gamma$ and $b=(b_n)_{n\ge 1}$ are eventually linearly recurrent. Define
\[
c_n:=b_n-\gamma_n-\gamma_1 \qquad (n\ge 1).
\]
Then $c=(c_n)_{n\ge 1}$ is eventually linearly recurrent and satisfies
\[
c_p=0
\]
for every prime $p$. Consequently,
\[
\sum_{n\ge 1} c_n z^n=P(z)+\sum_{j=1}^m H_j(z^{d_j})
\]
for some polynomial $P(z)$, integers $d_1,\dots,d_m>1$, and rational functions $H_1,\dots,H_m\in \C(z)$. In particular,
\[
\gamma_p=0,
\qquad
b_p=\gamma_1
\]
for all sufficiently large primes $p$. If $\gamma_1=0$, then both $\gamma_p$ and $b_p$ vanish for all sufficiently large primes $p$.
\end{proposition}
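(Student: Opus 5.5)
The first step is to check directly that $c_p=0$ at every prime. For a prime $p$ the only divisors are $1$ and $p$, so $b_p=\gamma_1+\gamma_p$, and therefore $c_p=b_p-\gamma_p-\gamma_1=0$. Next I will show that $c$ is eventually linearly recurrent. By Lemma \ref{lem:rational-recurrence}, the sequences $b$ and $\gamma$ both have rational generating functions. The constant sequence $\gamma_1$ has generating function $\gamma_1 z/(1-z)$, which is also rational. Rational functions are closed under addition, so $\sum_{n\ge1}c_nz^n$ is rational, and Lemma \ref{lem:rational-recurrence} again gives that $c$ is eventually linearly recurrent.

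Since $c_p=0$ for all primes, and in particular for all sufficiently large ones, Proposition \ref{prop:prime-support} applies directly to $\sum_{n\ge 1} c_n z^n$. It yields the decomposition $P(z)+\sum_j H_j(z^{d_j})$ with each $d_j>1$.

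For the claims at large primes, the cleanest route is Theorem \ref{thm:main-arith}. Under the hypotheses it says that $\gamma$ is finitely supported, so $\gamma_p=0$ for all sufficiently large primes $p$. Then $b_p=\gamma_1+\gamma_p=\gamma_1$ for those $p$. If in addition $\gamma_1=0$, both $\gamma_p$ and $b_p$ vanish for large $p$.

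If the intent is to avoid the main theorem, as the phrase ``weaker than'' suggests, I would instead try to get $\gamma_p=0$ from Skolem--Mahler--Lech. This is the step I expect to be the main obstacle. The identity $b_p=\gamma_1+\gamma_p$ alone does not force $\gamma_p=0$, so one needs more than the prime indices. My first attempt would compare prime and prime-square indices: $b_{p^2}=\gamma_1+\gamma_p+\gamma_{p^2}$. Together with Theorem \ref{thm:SML-rational} applied to the sequence $\gamma_{n}-\gamma_{n^2}$-type combinations this looks awkward, because $n\mapsto n^2$ does not preserve recurrence. So in the written proof I would simply invoke Theorem \ref{thm:main-arith}, since it is proved earlier, and treat the rest as routine.
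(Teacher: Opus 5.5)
Your proof is correct, and its first half matches the paper: $c_p=b_p-\gamma_p-\gamma_1=0$ because $b_p=\gamma_1+\gamma_p$, $c$ is eventually recurrent by closure, and Proposition~\ref{prop:prime-support} gives the decomposition. Invoking Theorem~\ref{thm:main-arith} for $\gamma_p=0$ is also legitimate. Its proof in Section~5 does not use this proposition, so there is no circularity, and it even gives the stronger conclusion that $\gamma$ is finitely supported.

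The paper takes a different route for that last step. It avoids the main theorem, and the obstacle you anticipated is not real. You were looking for a recurrence along $n\mapsto n^2$, but none is needed. You only need the eventual-support information that the proof of Proposition~\ref{prop:prime-support} already supplies for $c$ itself. There is a modulus $k$ such that, for all large $n$, $c_n\neq 0$ forces $n\equiv r\pmod k$ with $\gcd(r,k)>1$. For a large prime $p\nmid k$, the index $p^2$ is coprime to $k$, so $c_{p^2}=0$. On the other hand, the divisor expansion gives $c_{p^2}=(\gamma_1+\gamma_p+\gamma_{p^2})-\gamma_{p^2}-\gamma_1=\gamma_p$. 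Hence $\gamma_p=0$ for all large primes, and $b_p=\gamma_1$ follows.

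The paper's route uses only Skolem--Mahler--Lech and Dirichlet's theorem, applied to the recurrent sequence $c$ and then evaluated at $p^2$. That independence is the point of the proposition: it is meant as an elementary ``first layer'' of Theorem~\ref{thm:main-arith}. Your route is shorter once the main theorem is available and yields more, but it relies on the finite-specialization machinery and turns the proposition into a trivial corollary.
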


\begin{proof}
Since $\gamma$ and $b$ are eventually linearly recurrent, so is $c$. For a prime $p$ we have
\[
c_p=b_p-\gamma_p-\gamma_1=(\gamma_1+\gamma_p)-\gamma_p-\gamma_1=0.
\]
Applying Proposition \ref{prop:prime-support} to the rational generating function $\sum_{n\ge 1} c_n z^n$ yields the stated decomposition.

By the proof of Proposition \ref{prop:prime-support}, there exist an integer $k\ge 1$ and a finite set of residue classes $R\subset \{0,1,\dots,k-1\}$ such that, for all sufficiently large $n$,
\[
c_n\neq 0 \implies n\equiv r \pmod k \text{ for some } r\in R,
\qquad
\gcd(r,k)>1.
\]
If $p$ is a sufficiently large prime with $p\nmid k$, then $p^2$ is coprime to $k$, so $p^2$ lies in none of the eventual support classes. Hence $c_{p^2}=0$. On the other hand,
\[
c_{p^2}=b_{p^2}-\gamma_{p^2}-\gamma_1
=(\gamma_1+\gamma_p+\gamma_{p^2})-\gamma_{p^2}-\gamma_1
=\gamma_p.
\]
Therefore $\gamma_p=0$ for all sufficiently large primes $p$, and then
\[
b_p=\gamma_1+\gamma_p=\gamma_1
\]
for all sufficiently large primes $p$.
\end{proof}

\begin{remark}
Proposition \ref{prop:prime-primesquare} is a useful first layer of Theorem \ref{thm:main-arith}: comparing prime and prime-square coefficients forces vanishing at prime indices. What it does \emph{not} do by itself is relate $\gamma_{eq^2}$ back to $\gamma_e$ for fixed $e$. That extra step is precisely what the finite-place argument in the proof of Theorem \ref{thm:main-arith} supplies after reduction modulo a suitable finite field.
\end{remark}

The original operator question can be phrased as follows. Given $\gamma=(\gamma_n)_{n\ge 1}$, define
\[
(T_\gamma G)(z):=\sum_{n\ge 1}\gamma_n G(z^n),
\]
whenever the series makes sense formally near $z=0$. Since
\[
T_\gamma\!\left(\frac{z}{1-z}\right)=L_\gamma(z),
\]
one test function already suffices in the recurrent setting.

\begin{corollary}\label{cor:operator}
Let $\gamma=(\gamma_n)_{n\ge 1}$ be eventually linearly recurrent. If
\[
T_\gamma\!\left(\frac{z}{1-z}\right)
=\sum_{n\ge 1}\gamma_n\frac{z^n}{1-z^n}
\]
is rational, then $\gamma$ is finitely supported. In particular, if $T_\gamma$ sends every rational function with poles at roots of unity to a rational function, then $\gamma$ is finitely supported.
\end{corollary}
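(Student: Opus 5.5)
The first assertion is a direct specialization of Corollary \ref{cor:main-gf}, or equivalently of Theorem \ref{thm:main-arith}. Since $\gamma$ is eventually linearly recurrent, Lemma \ref{lem:rational-recurrence} shows that $\Gamma_\gamma(z)$ is rational. By hypothesis, $T_\gamma\bigl(z/(1-z)\bigr)=L_\gamma(z)$ is rational as well. I will first check the identity $T_\gamma(z/(1-z))=L_\gamma(z)$ as formal power series. Substituting $z^n$ into $z/(1-z)=\sum_{k\ge1}z^k$ gives $\sum_{k\ge 1} z^{nk}$. Each monomial $z^m$ receives contributions only from $n\le m$, so the sum over $n$ is formally well defined. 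Its coefficient sequence is $b_m=\sum_{d\mid m}\gamma_d$, as in \eqref{eq:lambert-coeffs}. Rationality of $L_\gamma$ therefore means exactly that $b$ is eventually linearly recurrent. Theorem \ref{thm:main-arith} then gives that $\gamma$ is finitely supported.

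For the ``in particular'' clause, I only need to observe that $G(z)=z/(1-z)$ is a rational function whose only pole is at $z=1$, a root of unity. If $T_\gamma$ sends every such rational function to a rational function, it sends this one to a rational function. The first part then applies. I will also state that the standing hypothesis that $\gamma$ is eventually linearly recurrent is still assumed in this clause.

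I expect no real obstacle here. The only point needing care is the formal well-definedness of $T_\gamma G$ for $G=z/(1-z)$: the series must converge $z$-adically. This holds because $G(0)=0$, so $G(z^n)\in z^n\C[[z]]$. I will note explicitly that this is why the test function is $z/(1-z)$ and not $1/(1-z)$. For $1/(1-z)$, the constant terms $\sum_n\gamma_n$ would not make sense formally.
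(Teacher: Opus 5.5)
Your proposal is correct and is essentially the argument the paper intends. The paper gives no separate proof; it relies on the identity $T_\gamma\bigl(z/(1-z)\bigr)=L_\gamma(z)$ to reduce the first claim to Corollary~\ref{cor:main-gf} (equivalently Theorem~\ref{thm:main-arith}), and the ``in particular'' clause follows because $z/(1-z)$ has its only pole at the root of unity $z=1$; your remark on formal well-definedness via $G(0)=0$ is a sensible addition the paper leaves implicit.
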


\begin{corollary}[periodic coefficients]
If $\gamma$ is periodic and $L_\gamma(z)$ is rational, then $\gamma$ is identically zero.
\end{corollary}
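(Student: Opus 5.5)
This follows directly from Corollary \ref{cor:main-gf}, i.e.\ from Theorem \ref{thm:main-arith}. A periodic sequence is linearly recurrent from the start. If $\gamma_{n+T}=\gamma_n$ for all $n\ge 1$, then $\gamma$ satisfies the recurrence $\gamma_{n+T}=\gamma_n$, whose trailing coefficient is $1\neq 0$. Equivalently, its ordinary generating function is rational:
\[
\Gamma_\gamma(z)=\frac{\sum_{j=1}^{T}\gamma_j z^j}{1-z^T}.
\]
So both $\Gamma_\gamma(z)$ and $L_\gamma(z)$ are rational, and Corollary \ref{cor:main-gf} (or Theorem \ref{thm:main-arith} applied to $\gamma$ and $b$) shows that $\gamma$ is finitely supported.

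It remains to see that a periodic, finitely supported sequence vanishes. Fix $n\ge 1$. Then $\gamma_n=\gamma_{n+kT}$ for every $k\ge 0$, and $\gamma_{n+kT}=0$ once $n+kT$ exceeds the support. Hence $\gamma_n=0$ for every $n$, so $\gamma\equiv 0$.

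I expect no real obstacle. The only point to check is that periodicity here means periodicity from the start. If only eventual periodicity were intended, the conclusion would instead be ``eventually zero'', i.e.\ finitely supported, and the statement as given would fail. So I read ``periodic'' in the strict sense.
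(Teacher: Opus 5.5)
Your proposal is correct and matches the paper's proof: periodic implies linearly recurrent from the start (hence rational $\Gamma_\gamma$), Corollary~\ref{cor:main-gf} then gives finite support, and a periodic finitely supported sequence must vanish. Your remark that the statement needs periodicity from the start is also right, since a nonzero finitely supported sequence is eventually periodic and has rational $L_\gamma$.
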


\begin{proof}
A periodic sequence is linearly recurrent from the start. If it is finitely supported, it must be the zero sequence. Now apply Corollary \ref{cor:main-gf}.
\end{proof}

\begin{corollary}[Fibonacci example]
Let $(F_n)_{n\ge 1}$ be the Fibonacci sequence. Then
\[
\sum_{n\ge 1}F_n\frac{z^n}{1-z^n}
\]
is not a rational function.
\end{corollary}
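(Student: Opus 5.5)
The Fibonacci corollary follows directly from Corollary~\ref{cor:operator}, equivalently from Corollary~\ref{cor:main-gf}. The plan is to check its two hypotheses and observe that the conclusion fails.

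First, $(F_n)_{n\ge 1}$ is linearly recurrent from the start: $F_{n+2}=F_{n+1}+F_n$, with trailing coefficient $c_2=1\neq 0$. Equivalently, its ordinary generating function
\[
\Gamma(z)=\sum_{n\ge 1}F_n z^n=\frac{z}{1-z-z^2}
\]
is rational, so Lemma~\ref{lem:rational-recurrence} applies.

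Next, suppose for contradiction that $L(z)=\sum_{n\ge 1}F_n z^n/(1-z^n)$ were rational. Corollary~\ref{cor:main-gf}, or Theorem~\ref{thm:main-arith} applied to $\gamma=F$ and $b_n=\sum_{d\mid n}F_d$, would then force $(F_n)$ to be finitely supported. But $F_n\ge 1$ for every $n\ge 1$, as an immediate induction from $F_1=F_2=1$ shows, so the Fibonacci sequence has infinite support. This contradiction proves that $L$ is not rational.

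None of these steps is difficult; all the real work lies in Theorem~\ref{thm:main-arith}. The only point to watch is the indexing convention: the paper's theorem uses sequences indexed from $n\ge1$ and Lemma~\ref{lem:rational-recurrence} uses $n\ge 0$. This is harmless, since shifting the index by one preserves both rationality of the generating function and eventual linear recurrence.
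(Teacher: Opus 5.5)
Your proposal is correct and matches the paper's proof: both note that the Fibonacci sequence is linearly recurrent (with rational generating function $z/(1-z-z^2)$) and not finitely supported, and then invoke Corollary~\ref{cor:main-gf}. Your extra checks, the positivity of $F_n$ and the harmless index shift, only make explicit what the paper leaves implicit.
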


\begin{proof}
The Fibonacci sequence is linearly recurrent and not finitely supported, so Corollary \ref{cor:main-gf} applies.
\end{proof}

\begin{remark}
The proof of Theorem \ref{thm:main-arith} is naturally nonarchimedean. One can read Lemma \ref{lem:finite-specialization} as the choice of a good finite place of the finitely generated field generated by the recurrence data. After reduction at that place, recurrent sequences become periodic or eventually periodic, and the prime-power comparison $mq$ versus $mq^2$ forces a fixed nonzero divisor sum to vanish. The argument does not require the coefficients of the recurrence to be algebraic; finite generation over $\Z$ is enough.
\end{remark}

\end{document}